\xdef\@endgadget#1{{\unskip\nobreak\hfil\penalty50\hskip1em\hbox{}\nobreak
    \hfil#1\parfillskip=0pt\finalhyphendemerits=0\par}}
\def\@qedsymbol{${}_\blacksquare$}
\def\qed{\@endgadget{\@qedsymbol}}
\newtheorem{lemma}{Lemma}[section]
\newtheorem{theorem}[lemma]{Theorem}
\newtheorem{example}[lemma]{Example}
\newtheorem{definition}[lemma]{Definition}
\newtheorem{proposition}[lemma]{Proposition}
\newtheorem{remark}[lemma]{Remark}
\newcommand{\mR}{\mathbb{R}}
\newcommand{\X}{\mathcal{X}}
\newcommand{\cS}{\mathcal{S}}
\newcommand{\bq}{\begin{equation}}
\newcommand{\eq}{\end{equation}}
\def\BibTeX{{\rm B\kern-.05em{\sc i\kern-.025em b}\kern-.08em
    T\kern-.1667em\lower.7ex\hbox{E}\kern-.125emX}}
\title{\LARGE \bf Cyclo-dissipativity revisited}
\author{A.J. van der Schaft
\thanks{A.J. van der Schaft is with the Bernoulli Institute for Mathematics, Computer
Science and AI, University of Groningen, PO Box 407, 9700 AK, the
Netherlands
        {\tt\small a.j.van.der.schaft@rug.nl}}
}
\date{}
\begin{document}

\maketitle
\thispagestyle{empty}
\pagestyle{empty}

\begin{abstract}
Starting from a symmetrization and extension of the basic definitions and results of dissipativity theory we obtain new results on cyclo-dissipativity; in particular their external characterization and description of the set of storage functions.
\end{abstract}

\section{Introduction}
Dissipativity theory originates from the seminal paper \cite{willems1972}. It unifies classical theory, centered around the passivity and small-gain theorems, with Lyapunov function theory for autonomous dynamical systems. In particular, it aims at deriving Lyapunov functions for large-scale interconnected systems, based on the knowledge of the component systems, and the way they are coupled to each other. Furthermore, it directly relates to physical systems theory, network synthesis, and optimal control. 

The more general notion of {\it cyclo-dissipativity} was first formulated\footnote{I would like to thank Pablo Borja and Romeo Ortega for pointing out to me the somewhat forgotten references \cite{hillmoylan1975} and \cite{willems1973}.} in \cite{willems1973}, aimed at extending the stability analysis based on dissipativity towards instability theorems. Implicitly the notion was already present in \cite{willems1971}, motivated by infinite-horizon optimal control. In the technical report \cite{hillmoylan1975} cyclo-dissipativity was further explored, extending the fundamental results obtained in \cite{willems1972} for ordinary dissipativity. Since then the notion of cyclo-dissipativity has not received much detailed attention, although it regularly appears in passivity-based control, e.g., \cite{ortega}. 

In the current note we will revisit the notion of dissipativity, by unifying earlier definitions and developments. This will turn out to be instrumental for developing a more complete theory of cyclo-dissipativity, extending the results of \cite{hillmoylan1975}. Finally, the developed theory will be illustrated on the formulation of the Clausius inequality in thermodynamics, and on the recently introduced notion of one-port cyclo-passivity.

\section{Dissipativity revisited}
In this section we recall the basic definitions and results of dissipativity theory as developed in the groundbreaking paper \cite{willems1972} (with some extensions due to \cite{hillmoylan1980} and \cite{passivitybook}) and put them into a more general and unifying context, as a preparation for the results on cyclo-dissipativity in the next section.

Consider a nonlinear input-state-output system
\bq
\Sigma:  \; 
\begin{array}{rcl}
\dot{x} & = & f(x,u), \quad x \in \X, u \in \mR^m \\[2mm]
y & = & h(x,u), \quad y \in \mR^p
\end{array}
\eq
on an $n$-dimensional state space manifold $\X$. Consider a {\it supply rate}
\bq
s:   \mR^m \times \mR^p \to \mR
\eq
Throughout it will be assumed that for all solutions of $\Sigma$ the integrals $\int_{t_1}^{t_2} s(u(t),y(t)) dt$ are well-defined for all $t_1,t_2$.

Given the system $\Sigma$ and supply rate $s$ the, possibly extended, function
$S: \X \to -\infty \cup \mR \cup \infty$ satisfies the {\it dissipation inequality} if\footnote{Assuming differentiability the dissipation inequality \eqref{DIE} is easily seen \cite{passivitybook} to be equivalent to the {\it differential dissipation inequality} $\dot{S}(x,u) \leq s(u,h(x,u))$, where $\dot{S}(x,u):=\frac{\partial S}{\partial x^T}(x)f(x,u)$.}
\bq
\label{DIE}
S(x(t_2)) \leq S(x(t_1)) + \int_{t_1}^{t_2} s(u(t),y(t)) dt
\eq
holds for all $t_1 \leq t_2$, all input functions $u:[t_1,t_2] \to \mR^m$, and all initial conditions $x(t_1)$, where $y(t)=h(x(t),u(t))$, with $x(t)$ denoting the solution of $\dot{x}=f(x,u)$ for initial condition $x(t_1)$ and input function $u:[t_1,t_2] \to \mR^m$. In particular this implies that if $S(x(t_2))$ equals $\infty$, then so does $S(x(t_1))$, and if $S(x(t_1))$ equals $- \infty$, then so does $S(x(t_2))$.

A {\it non-extended} function $S : \X \to \mR$ satisfying the dissipation inequality \eqref{DIE} is called a {\it storage function}\footnote{Note that we do not yet require $S$ to be nonnegative or bounded from below.}. This leads to the following standard definition of dissipativity as pioneered in the seminal paper \cite{willems1972}; see also \cite{hillmoylan1980}.
\begin{definition}
The system $\Sigma$ is {\it dissipative} with respect to the supply rate $s$ if there exists a {\it nonnegative} storage function $S: \X \to \mR^+$. If the nonnegative storage function $S: \X \to \mR^+$ satisfies \eqref{DIE} with equality then the system is called {\it lossless}.
\end{definition}
In case of the supply rate $s(u,y)=y^Tu, \, u,y\in \mR^m$ 'dissipativity' is usually referred to as '{\it passivity}'.

In order to characterize dissipativity (and subsequently the weaker property of {\it cyclo-dissipativity}), let us define the following, possibly extended, functions\footnote{Here $a$ refers to 'available storage', and $r$ to 'required storage'. Note that we deviate from the standard notation, where $S_r$ in fact refers to the function $S_{rc}$ as defined next. In fact, in the standard treatments of dissipativity \cite{willems1972, willems1973, hillmoylan1975, hillmoylan1980, passivitybook} only the functions $S_a$ and $S_{rc}$ (there denoted as $S_r$) are used. The present set-up aims at 'symmetrizing' the picture; also with a view on cyclo-dissipativity.} $S_a: \X \to \mR \cup \infty$ and $S_r: \X \to - \infty \cup \mR $
\bq
\begin{array}{l}
S_a(x) = \sup_{u,T\geq 0 \mid x(0)=x} - \int_0^Ts(u(t),y(t)) dt \\[2mm]
S_r(x) = \inf_{u,T\geq 0 \mid x(0)=x}  \int_{-T}^0s(u(t),y(t)) dt
\end{array}
\eq
Interpreting the supply rate $s$ as the 'power' supplied to the system, and the storage function $S$ as the 'energy' stored in the system, the function $S_a(x)$ equals the maximally extractable 'energy' from the system at state $x$ \cite{willems1972}. Similarly, $S_r$ equals the 'energy' that needs to be minimally supplied to the system while bringing it to state $x$.
Obviously $S_a,S_r$ satisfy
\bq
S_a(x) \geq 0, \quad S_r(x) \leq 0
\eq
Furthermore, assuming {\it reachability} from a certain ground-state $x^*$ and {\it controllability} to this same state $x^*$, we define the, possibly extended, functions $S_{ac}: \X \to \mR \cup \infty, S_{rc} : \X \to - \infty \cup \mR$ as\footnote{Here $c$ stands for 'constrained', since either $x(T)=x^*$ or $x(-T)=x^*$.}
\bq
\begin{array}{l}
S_{ac}(x) = \sup_{u,T\geq 0 \mid x(0)=x,x(T)=x^*} - \int_0^Ts(u(t),y(t)) dt \\[2mm]
S_{rc}(x) = \inf_{u,T\geq 0 \mid x(-T)=x^*,x(0)=x}  \int_{-T}^0s(u(t),y(t)) dt,
\end{array}
\eq
which again have an obvious interpretation in terms of 'energy'.

Clearly for all $x \in \X$
\bq
\label{ineq}
\begin{array}{l}
- \infty < S_{ac}(x) \leq S_a(x) \\[2mm]
S_r(x) \leq S_{rc}(x) < \infty
\end{array}
\eq
Furthermore, it is straightforward\footnote{The first equality in \eqref{EQ} already figures in \cite{passivitybook}; the second one is similar.} to check that the above four functions are related by
\bq
\label{eq}
\begin{array}{l}
S_a(x^*) = \sup_x - S_{rc}(x) \; (= \\[2mm]
\sup_x \sup_{u,T\geq 0 \mid x(-T)=x^*,x(0)=x}  - \int_{-T}^0 s(u(t),y(t)) dt  \, ) \\[6mm]
S_r(x^*) = \inf_x -S_{ac}(x) \;(= \\[2mm]
\inf_x \inf_{u,T\geq 0 \mid x(0)=x,x(T)=x^*} \int_0^Ts(u(t),y(t)) dt \, ) 
\end{array}
\eq
In particular it follows that
\bq
\label{rel}
\begin{array}{l}
S_a(x^*) < \infty \Leftrightarrow \inf_x S_{rc}(x) > - \infty \\[4mm]
S_r(x^*) > - \infty \Leftrightarrow \sup_x S_{ac}(x) < \infty 
\end{array}
\eq
By using the definitions of infimum and supremum it is easily verified (see \cite{willems1971,willems1972,passivitybook}) that all four (possibly extended) functions $S_a,S_r, S_{ac},S_{rc}$ satisfy the dissipation inequality \eqref{DIE}.

The following theorem summarizes some of the main findings of dissipativity theory as formulated in \cite{passivitybook}, extending the fundamental results of \cite{willems1972}; see also \cite{hillmoylan1980}. 
\begin{theorem}
$\Sigma$ is dissipative if and only if $S_a(x) < \infty$ for all $x \in \X$ (that is, $S_a: \X \to \mR$). If $\Sigma$ is reachable from $x^*$ then $\Sigma$ is dissipative if and only if $S_a(x^*) < \infty$, or equivalently (see \eqref{rel}) $\inf_x S_{rc}(x) > - \infty $. Furthermore, if $\Sigma$ is dissipative then $S_a$ is a nonnegative storage function satisfying $\inf_x S_a(x)=0$, and all other nonnegative storage functions $S$ satisfy
\bq
S_a(x) \leq S(x) - \inf_x S(x),\quad x \in \X
\eq
Moreover, if $\Sigma$ is dissipative then $S_{rc} - \inf_xS_{rc}(x)$ is a nonnegative storage function, and all other nonnegative storage functions $S$ satisfy
\bq
S(x) - S(x^*) \leq S_{rc}(x) 
\eq
\end{theorem}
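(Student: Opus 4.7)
The plan is to derive all four claims directly from two facts stated just before the theorem: each of $S_a, S_r, S_{ac}, S_{rc}$ satisfies the dissipation inequality \eqref{DIE}, and the relations \eqref{ineq}--\eqref{rel} among them. The main technical ingredient throughout is a concatenation-and-time-shift argument on trajectories. For the first equivalence, if $S_a$ is everywhere finite then, since $S_a \geq 0$ (use $T = 0$ in the defining sup) and $S_a$ satisfies \eqref{DIE}, $S_a$ itself is a nonnegative storage function. Conversely, given any nonnegative storage function $S$, applying \eqref{DIE} from $t_1 = 0$ to $t_2 = T$ with $x(0) = x$ yields $-\int_0^T s(u(t),y(t))\,dt \leq S(x) - S(x(T)) \leq S(x) - \inf_x S(x)$; taking the supremum over $u, T$ simultaneously proves $S_a(x) \leq S(x) < \infty$ and the refined bound $S_a(x) \leq S(x) - \inf_x S(x)$ needed in part three.

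For the reachability statement, the nontrivial direction is $S_a(x^*) < \infty \Rightarrow S_a(x) < \infty$ for all $x$. Reachability supplies an input on some interval $[-T_1, 0]$ steering $x^*$ to $x$ with finite accumulated supply $C$. Concatenated with any $(u', T_2)$ starting from $x$, this yields a single trajectory of length $T_1 + T_2$ emanating from $x^*$, so after time-shifting to place $x^*$ at time $0$ one obtains $-\int_0^{T_2} s(u'(t),y'(t))\,dt \leq C + S_a(x^*)$; taking the sup over $(u', T_2)$ gives $S_a(x) \leq C + S_a(x^*) < \infty$. The equivalence with $\inf_x S_{rc}(x) > -\infty$ is \eqref{rel}.

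For $\inf_x S_a(x) = 0$, fix $x$ and $\epsilon > 0$, choose $(u^*, T^*)$ with $-\int_0^{T^*} s^*\,dt > S_a(x) - \epsilon$, and let $\tilde x$ be the terminal state. Any continuation $(u', T')$ from $\tilde x$ concatenates with $(u^*, T^*)$ into an admissible trajectory from $x$, giving $-\int_0^{T^*} s^*\,dt - \int_0^{T'} s'\,dt \leq S_a(x)$ and hence $-\int_0^{T'} s'\,dt < \epsilon$; taking the sup over $(u', T')$ yields $S_a(\tilde x) \leq \epsilon$, so $\inf_x S_a(x) \leq \epsilon$ for every $\epsilon > 0$.

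Finally, dissipativity plus \eqref{rel} gives $\inf_x S_{rc}(x) > -\infty$, while controllability to $x^*$ makes the feasible set defining $S_{rc}(x)$ nonempty, so $S_{rc}(x) < \infty$. Thus $S_{rc} - \inf_x S_{rc}$ is a finite nonnegative function, and adding a constant preserves \eqref{DIE}, so it is a storage function. For the upper bound, any nonnegative storage function $S$ and any trajectory with $x(-T) = x^*, x(0) = x$ satisfy $S(x) \leq S(x^*) + \int_{-T}^0 s\,dt$ by \eqref{DIE}, and infimizing over the admissible $(u, T)$ yields $S(x) - S(x^*) \leq S_{rc}(x)$. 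The main obstacle is the concatenation-and-time-shift bookkeeping used in paragraphs two and three: one has to verify that joining two admissible trajectories indeed produces a single admissible trajectory whose total extracted supply is controlled by $S_a(x^*)$ or $S_a(x)$; every other step is a direct unwinding of the definitions.
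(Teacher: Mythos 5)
Your proof is correct and follows the classical Willems/van der Schaft arguments; note that the paper itself states this theorem without proof, deferring to \cite{willems1972}, \cite{hillmoylan1980}, \cite{passivitybook}, so there is no in-paper proof to diverge from. The concatenation-and-time-shift bookkeeping you flag as the main obstacle is handled correctly: the combined trajectory is admissible because the state at the junction matches, and the supply integral splits additively, which is all that is needed for the reachability equivalence and for the $\epsilon$-argument giving $\inf_x S_a(x)=0$.

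One small slip in the final paragraph: you assert that \emph{controllability to} $x^*$ makes the feasible set defining $S_{rc}(x)$ nonempty. The admissible trajectories for $S_{rc}(x)$ run from $x^*$ at time $-T$ to $x$ at time $0$, so it is \emph{reachability of $x$ from} $x^*$ that guarantees nonemptiness (and hence $S_{rc}(x)<\infty$, as already recorded in \eqref{ineq}); controllability to $x^*$ is what makes the feasible set for $S_{ac}$ nonempty. Since the clauses of the theorem involving $S_{rc}$ are stated under the standing assumption of reachability from $x^*$, the conclusion is unaffected --- only the cited hypothesis is the wrong one. Everything else, including the derivation of $S_a(x)\le S(x)-\inf_x S(x)$ and $S(x)-S(x^*)\le S_{rc}(x)$ directly from \eqref{DIE} and the fact (quoted from the paper) that all four functions satisfy \eqref{DIE}, is sound.
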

\begin{remark}
In case the system is reachable from $x^*$ while furthermore $S_a(x^*)=0$ (maximally extractable energy from ground state $x^*$ is zero), then it directly follows from the dissipation inequality \eqref{DIE} that dissipativity is equivalent to the {\it external characterization}
\bq
\int_0^Ts(u(t),y(t)) dt \geq 0,
\eq
for all $u:[0,T] \to \mR^m, T\geq 0$, where $y(t)=h(x(t),u(t))$ with $x(t)$ the solution of $\dot{x}=f(x,u)$ for initial condition $x(0)=x^*$.
This is sometimes taken as the definition of dissipativity; especially in the linear case with $x^*=0$. Note however that in a nonlinear context there is often no natural groundstate.
\end{remark}
In the next proposition similar results will be derived for the newly defined functions $S_r$ and $S_{ac}$; however with the key difference that $S_r \leq 0$, and thus does {\it not} correspond to dissipativity.
\begin{proposition}
If $S_r(x)>-\infty$ for all $x \in \X$ then $S_r$ is a nonpositive storage function. If $\Sigma$ is controllable to $x^*$ then $S_r(x)>-\infty$ for all $x \in \X$ if and only if $S_r(x^*) >- \infty$, or equivalently $\sup_x S_{ac}(x) < \infty $. Furthermore, if $S_r(x)>-\infty$ for all $x \in \X$ then $S_r$ is a nonpositive storage function satisfying $\sup_x S_r(x)=0$, and all other nonpositive storage functions $S$ satisfy
\bq
S_r(x) \geq S(x) - \sup_x S(x)
\eq
Moreover, if $S_r(x^*) >- \infty$ then $S_{ac} - \sup_xS_{ac}(x)$ is a nonpositive storage function, and all nonpositive storage functions $S$ satisfy
\bq
S(x) - S(x^*) \geq S_{ac}(x) 
\eq
\end{proposition}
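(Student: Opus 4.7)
My plan is to prove the Proposition as the \emph{dual} of the preceding Theorem, exploiting the symmetric roles that $S_a,S_r,S_{ac},S_{rc}$ play under sign reversal of the supply rate and reversal of time. Each step mirrors an argument already used in the dissipativity case.

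First, recall that $S_r$ has been noted to satisfy the dissipation inequality, and via the trivial $T=0$ trajectory $S_r(x)\leq 0$ for every $x$. So whenever $S_r>-\infty$ pointwise, it is automatically a nonpositive storage function. For the controllability assertion I would assume controllability to $x^*$ and argue by contradiction: if $S_r(x^*)>-\infty$ but $S_r(x_0)=-\infty$ for some $x_0$, pick a controlled trajectory steering $x_0$ to $x^*$ in time $\tilde T$ with some \emph{finite} supply cost $c_0$, and concatenate it at time $0$ with a sequence of trajectories on $[-T_n,0]$ ending at $x_0$ whose supply integrals tend to $-\infty$. After a time shift by $-\tilde T$, the result is a sequence of trajectories ending at $x^*$ at time $0$ with supply integrals tending to $-\infty$, contradicting $S_r(x^*)>-\infty$. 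The equivalence with $\sup_x S_{ac}(x)<\infty$ is immediate from the second line of \eqref{rel}.

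Next, to establish $\sup_x S_r(x)=0$, the bound $\leq 0$ is immediate from the first step. For the reverse bound, fix $x$ and $\epsilon>0$: by the definition of infimum there is a trajectory on $[-T,0]$ ending at $x$, starting at some $x'$, whose supply integral $v$ satisfies $v\leq S_r(x)+\epsilon$. Applied to this trajectory the dissipation inequality gives $S_r(x)\leq S_r(x')+v$, hence $S_r(x')\geq S_r(x)-v\geq -\epsilon$; letting $\epsilon\downarrow 0$ yields $\sup_x S_r(x)\geq 0$. The extremality inequality $S_r(x)\geq S(x)-\sup_x S(x)$ for any nonpositive storage function $S$ then follows by writing the dissipation inequality along an arbitrary trajectory ending at $x$ as $\int_{-T}^0 s\geq S(x)-S(x(-T))\geq S(x)-\sup_x S(x)$ and infimizing over such trajectories.

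Finally, for the $S_{ac}$ statement I would use (still under controllability to $x^*$, which ensures $S_{ac}(x)>-\infty$ everywhere) that $S_{ac}$ satisfies the dissipation inequality as already noted; the hypothesis $S_r(x^*)>-\infty$ together with \eqref{rel} gives $\sup_x S_{ac}(x)<\infty$, so $S_{ac}-\sup_x S_{ac}(x)$ is real-valued and nonpositive, hence a nonpositive storage function. The inequality $S(x)-S(x^*)\geq S_{ac}(x)$ follows by applying the dissipation inequality along any trajectory from $x$ to $x^*$: $S(x^*)\leq S(x)+\int_0^T s\,dt$, so $-\int_0^T s\,dt\leq S(x)-S(x^*)$, and supremizing yields $S_{ac}(x)\leq S(x)-S(x^*)$. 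I do not foresee a genuine obstacle: the only slightly delicate point is checking that the concatenation-and-time-shift in the controllability step preserves the value of the supply integral, and that the degenerate case $S_r(x)=0$ in the $\sup$ step is handled automatically since then $x$ itself already witnesses $\sup_x S_r(x)\geq 0$.
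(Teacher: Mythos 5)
Your proof is correct and follows exactly the route the paper intends: the paper states this proposition without proof as the sign-reversed dual of the preceding theorem, and your arguments (the $T=0$ trajectory for nonpositivity, concatenation under controllability, the $\epsilon$-argument for $\sup_x S_r(x)=0$, and the dissipation inequality along trajectories into/out of $x^*$ for the extremal bounds) are precisely the dualized standard ones; indeed your derivation of $S(x)-S(x^*)\geq S_{ac}(x)$ coincides with the computation the paper writes out later in the proof of Theorem~\ref{cycthm}. No gaps.
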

\begin{remark}
The {\it reversed} dissipation inequality 
\bq
\label{DIErev}
S(x(t_2)) \geq S(x(t_1)) + \int_{t_1}^{t_2} s(u(t),y(t)) dt, 
\eq
appears in optimal control theory, with $s$ the running cost; see e.g. \cite{willems1971, passivitybook}. Obviously the reversed dissipation inequality is obtained from \eqref{DIE} by replacing $s$ by $-s$ and $S$ by $-S$ (and thus $-SR\geq0$).
\end{remark}
\begin{remark}
Under certain conditions on the supply rate $s$ and the system $\Sigma$ the definitions of $S_a$ and $S_r$ can be modified to the {\it indefinite} integrals
\bq
\begin{array}{l}
S_a(x) = \sup_{u \mid x(0)=x} - \int_0^{\infty}s(u(t),y(t)) dt \\[2mm]
S_r(x) = \inf_{u \mid x(0)=x}  \int_{-\infty}^0s(u(t),y(t)) dt
\end{array}
\eq
with $x(t)$ approaching $x^*$ for $t \to \infty$ in the first case (see \cite{willems1971}, \cite{passivitybook} for precise statements), and for $t \to -\infty$ in the second case. This is the context of the developments in \cite{willems1971}, where however the system dynamics was assumed to be linear.
\end{remark}
\begin{example}
Consider the scalar system
\bq
\Sigma: \, \dot{x}=u, \; y=e^x
\eq
with passivity supply rate $s(u,y)=uy$.
Then
\bq
\begin{array}{l}
S_a(x)= \sup_{T\geq 0} - (e^{x(T)} - e^x) = e^x \\[2mm]
S_r(x) = \inf_{T\geq 0} (e^x - e^{x(-T)}) = - \infty
\end{array}
\eq
Hence $\Sigma$ is passive with nonnegative storage function $e^x$ (unique up to a constant). Obviously, $\Sigma$ is reachable from and controllable to $x^*=0$ (for example). With respect to $x^*=0$
\bq
S_{ac}(x)= - (e^0-e^x)= e^x -1, \; S_{rc}(x)= -(e^x-e^0)= e^x -1,
\eq
i.e., $S_{ac}=S_{rc} = e^x-1$.
%
Any storage function $S$ satisfies $S_{ac}(x) \leq S(x) -S(0) \leq S_{rc}(x)$, i.e., $S(x) -S(0)= e^x - 1$. Hence the storage function is unique up to a constant.\\
Note that the example could be extended to e.g. the passive (non-lossless) case $\dot{x}= - x^2 + u, y=e^x$.
\end{example}

\section{Cyclo-dissipativity}
Next we come to the study of {\it cyclo-dissipativity}, as coined in \cite{willems1973} with instability theorems for interconnected systems in mind, and further developed in \cite{hillmoylan1975}.
\begin{definition}
$\Sigma$ is {\it cyclo-dissipative} if 
\bq
\label{cyclic}
\oint s(u(t),y(t)) dt \geq 0
\eq
for all $T\geq 0$ and all $u:[0,T] \to \mR^m$ such that $x(T)=x(0)$.
Furthermore, $\Sigma$ is called cyclo-dissipative {\it with respect to} $x^*$ if 
\bq
\label{cyclicstar}
\oint s(u(t),y(t)) dt \geq 0
\eq
for all $T\geq 0$ and all $u:[0,T] \to \mR^m$ such that $x(T)=x(0)=x^*$. In case \eqref{cyclic} or \eqref{cyclicstar} holds with equality, we speak about cyclo-losslessness.
\end{definition}

The following proposition is obvious, and follows from substituting $x(T)=x(0)$ in \eqref{DIE}.
\begin{proposition}
\label{obvious}
If there exists a storage function for the system $\Sigma$ then $\Sigma$ is cyclo-dissipative.
\end{proposition}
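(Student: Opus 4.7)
The plan is to verify cyclo-dissipativity directly by applying the dissipation inequality \eqref{DIE} on a closed orbit. Suppose a storage function $S:\X\to\mR$ exists, so that for every $t_1\leq t_2$, every input $u:[t_1,t_2]\to\mR^m$, and every initial condition $x(t_1)$,
\begin{equation*}
S(x(t_2)) \;\leq\; S(x(t_1)) + \int_{t_1}^{t_2} s(u(t),y(t))\,dt.
\end{equation*}
Rearranging gives $\int_{t_1}^{t_2} s(u(t),y(t))\,dt \geq S(x(t_2)) - S(x(t_1))$.

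First I would specialize this to $t_1=0$ and $t_2=T$ and pick any input $u:[0,T]\to\mR^m$ for which the resulting state trajectory satisfies $x(T)=x(0)$ (this is admissible because $S$ was assumed to be a non-extended function, hence $S(x(0))$ and $S(x(T))$ are finite real numbers that are automatically equal when $x(T)=x(0)$). The rearranged dissipation inequality then yields
\begin{equation*}
\oint s(u(t),y(t))\,dt \;=\; \int_0^T s(u(t),y(t))\,dt \;\geq\; S(x(T)) - S(x(0)) \;=\; 0,
\end{equation*}
which is exactly \eqref{cyclic}. Since the argument holds for arbitrary initial state $x(0)$, it in particular applies to the restricted class of closed orbits with $x(0)=x(T)=x^*$, so \eqref{cyclicstar} is a direct corollary and $\Sigma$ is cyclo-dissipative with respect to any chosen ground state as well.

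There is essentially no obstacle here: the only subtle point worth flagging is that the argument genuinely uses the non-extended nature of $S$ (so the difference $S(x(T))-S(x(0))$ is unambiguously zero on a closed orbit). If one allowed $S$ to take the values $\pm\infty$ one would have to rule out indeterminate differences, but for the notion of storage function as defined in the excerpt this concern does not arise.
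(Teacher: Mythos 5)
Your argument is correct and is exactly the paper's own reasoning: the paper dispatches this proposition in one sentence by "substituting $x(T)=x(0)$ in \eqref{DIE}", which is precisely your rearrangement. Your added remark that the non-extended nature of $S$ guarantees $S(x(T))-S(x(0))=0$ is a valid and worthwhile clarification, but it does not change the route.
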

The following theorem extends the results in \cite{hillmoylan1975} in a number of directions.
\begin{theorem}
\label{cycthm}
Assume that $\Sigma$ is reachable from $x^*$ and controllable to $x^*$. Then $\Sigma$ is cyclo-dissipative with respect to $x^*$ if and only if
\bq
\label{acrc}
S_{ac}(x) \leq S_{rc}(x), \quad x \in \X
\eq
In particular, if $\Sigma$ is cyclo-dissipative with respect to $x^*$ then both $S_{ac}$ and $S_{rc}$ are storage functions, and thus $\Sigma$ is cyclo-dissipative.
Furthermore, if $\Sigma$ is cyclo-dissipative with respect to $x^*$ then
\bq
S_{ac}(x^*) = S_{rc}(x^*)=0,
\eq
and any other storage function $S$ satisfies
\bq
\label{**}
S_{ac}(x) \leq S(x) - S(x^*) \leq S_{rc}(x) 
\eq
\end{theorem}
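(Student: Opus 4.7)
The plan is to split the argument into four stages matching the four assertions, in order.

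For the equivalence, the $(\Rightarrow)$ direction uses a trajectory-concatenation trick. Fix $x\in\X$; pick, by reachability, an input on $[-T_1,0]$ steering $x^*$ to $x$, and by controllability an input on $[0,T_2]$ steering $x$ back to $x^*$. Their concatenation is a loop at $x^*$, whose total supply cost is $\geq 0$ by cyclo-dissipativity w.r.t.\ $x^*$. Rearranging so that the ``forward'' leg sits on one side and the ``backward'' leg on the other, then taking the supremum on the left (yielding $S_{ac}(x)$) and the infimum on the right (yielding $S_{rc}(x)$), gives $S_{ac}(x)\leq S_{rc}(x)$.

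For the $(\Leftarrow)$ direction I would specialize to $x=x^*$. The trivial $T=0$ trajectory is admissible in both optimizations, so $S_{ac}(x^*)\geq 0$ and $S_{rc}(x^*)\leq 0$; the assumed sandwich $S_{ac}(x^*)\leq S_{rc}(x^*)$ then pinches both to zero. This simultaneously delivers the corner-value assertion $S_{ac}(x^*)=S_{rc}(x^*)=0$ and proves the loop condition, since any loop at $x^*$ (after a time shift to $[-T,0]$) is a feasible competitor in the infimum defining $S_{rc}(x^*)=0$, so its cost is $\geq 0$.

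For the storage-function claim, the text already records that the possibly extended-valued $S_{ac}$ and $S_{rc}$ satisfy the dissipation inequality in general; what remains is finiteness. Combining \eqref{ineq} with the freshly established $S_{ac}\leq S_{rc}$, and using the admissible trajectories furnished by reachability and controllability as finite witnesses for the remaining side of each optimization, sandwiches both functions in $\mR$. Existence of a storage function then implies cyclo-dissipativity of $\Sigma$ via Proposition~\ref{obvious}.

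For the bounds on an arbitrary storage function $S$, I would apply the dissipation inequality along a trajectory from $x$ at time $0$ to $x^*$ at time $T$ to obtain $-\int_0^T s\,dt\leq S(x)-S(x^*)$, and take the supremum to get $S_{ac}(x)\leq S(x)-S(x^*)$; the mirror-image argument on trajectories from $x^*$ at time $-T$ to $x$ at time $0$ gives $S(x)-S(x^*)\leq S_{rc}(x)$. The only genuinely delicate point in the whole proof is the finiteness verification in the previous paragraph: one must use the newly proved sandwich together with the bounds from \eqref{ineq} and the admissibility witnesses to upgrade $S_{ac}$ and $S_{rc}$ from possibly extended functions to $\mR$-valued storage functions. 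Every other step is a straightforward rearrangement of the dissipation inequality followed by the appropriate sup or inf.
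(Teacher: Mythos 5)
Your proposal is correct and follows essentially the same route as the paper: the concatenation argument for the forward implication, finiteness of $S_{ac}$ and $S_{rc}$ obtained from \eqref{ineq} together with the freshly proved sandwich for the storage-function claim, and the dissipation inequality along connecting trajectories for \eqref{**}. The only (harmless) variation is in the converse direction, where you verify the loop condition at $x^*$ directly from $S_{rc}(x^*)=0$, whereas the paper invokes Proposition~\ref{obvious} to conclude full cyclo-dissipativity and hence, a fortiori, cyclo-dissipativity with respect to $x^*$.
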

\begin{proof}
Suppose $\Sigma$ is cyclo-dissipative with respect to $x^*$. Since $\Sigma$ is reachable from and controllable to $x^*$, there exist trajectories $x(t)$ with $x(-T)=x(T)=x^*, x(0)=x$. For any of these 
\bq
\int_{-T}^0 s(u(t),y(t)) dt + \int_{0}^T s(u(t),y(t)) dt \geq 0,
\eq
and thus 
\bq
\int_{-T}^0 s(u(t),y(t)) dt \geq - \int_{0}^T s(u(t),y(t)) dt
\eq
Taking infimum on the left-hand side and supremum on the right-hand side we obtain $S_{rc}(x) \geq S_{ac}(x)$ for any $x$.
Conversely, let \eqref{acrc} hold. Since by definition $S_{ac}(x)> -\infty, S_{rc}(x) < \infty$, it follows that both $S_{ac}$ and $S_{rc}$ are storage functions. Thus by Proposition \ref{obvious} $\Sigma$ is cyclo-dissipative.\\
Furthermore, let $\Sigma$ be cyclo-dissipative with respect to $x^*$. By definition $S_{ac}(x^*) \geq 0$. On the other hand, in view of \eqref{cyclicstar}, we also have $S_{ac}(x^*) \leq 0$, thus implying $S_{ac}(x^*) = 0$. Similarly, $S_{rc}(x^*) \leq 0$, and in view of \eqref{cyclicstar} also $S_{rc}(x^*) \geq 0$, thus implying $S_{rc}(x^*) = 0$.\\
Finally, let $\Sigma$ be cyclo-dissipative with respect to $x^*$. Then
\bq
S(x^*) - S(x) \leq \int_0^T s(u(t),y(t)) dt
\eq
for any trajectory from $x$ at $t=0$ to $x^*$ at $t=T$. This is equivalent to
\bq
\begin{array}{c}
S(x) - S(x^*) \geq \\[2mm]
\sup_{u,T\geq 0 \mid x(0)=x,x(T)=x^*} - \int_0^T s(u(t),y(t)) dt =S_{ac}(x)
\end{array}
\eq
Similarly 
\bq
S(x) - S(x^*) \leq \int_{-T}^0 s(u(t),y(t)) dt
\eq
for any trajectory from $x^*$ at $t=-T$ to $x$ at $t=0$. This is equivalent to
\bq
\begin{array}{c}
S(x) - S(x^*) \leq \\[2mm]
\inf_{u,T\geq 0 \mid x(-T)=x^*,x(0)=x}  \int_{-T}^0 s(u(t),y(t)) dt =S_{rc}(x)
\end{array}
\eq
\end{proof}
\begin{remark}
Note that all results concerning interconnection of dissipative systems as developed in \cite{willems1972}, \cite{hillmoylan1980}, \cite{passivitybook} remain to hold for cyclo-dissipative systems, with the difference that the Lyapunov function obtained for the interconnected system by summing the storage functions for the cyclo-dissipative system components is no longer nonnegative. Hence in principle only {\it instability} results can be inferred; this is the motivation for cyclo-dissipativity in \cite{willems1973}.
\end{remark}
\begin{remark}
Cyclo-dissipativity, instead of dissipativity, is not uncommon in physical systems modeling; especially in the nonlinear case. For example, the gravitational energy between two masses is proportional to $-\frac{1}{r}$, with $r\geq 0$ the distance between the two masses. This function is {\it not} bounded from below, and thus cannot be turned into a nonnegative storage function by addition of a constant. Energy functions that are not bounded from below also appear frequently in thermodynamic systems; see Example \ref{therm} later on.
\end{remark}
Finally, the functions $S_{ac}$ and $S_{rc}$ depend on the choice of the ground-state $x^*$, and one may wonder about the relation between these functions for different ground-states. Partial information is provided in the following proposition.
\begin{proposition}
Denote $S_{ac}$ for ground-state $x^*$ by $S^*_{ac}$, and for ground-state $x^{**}$ by $S^{**}_{ac}$. Similarly define $S^{*}_{rc}$ and $S^{**}_{rc}$. Then
\bq
S^{*}_{ac}(x^{**}) + S^{**}_{ac}(x^{*}) \leq 0, \quad S^{*}_{rc}(x^{**}) + S^{**}_{rc}(x^{*}) \geq 0
\eq
\end{proposition}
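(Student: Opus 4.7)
The plan is to argue by concatenation of trajectories to produce closed loops, and then to invoke cyclo-dissipativity with respect to $x^*$ (equivalently with respect to $x^{**}$, since what we use is just that closed loops passing through both points satisfy \eqref{cyclic}). For the first inequality, I would take any admissible trajectory $\gamma_1$ from $x^{**}$ at $t=0$ to $x^*$ at $t=T_1$, available by controllability to $x^*$ starting from $x^{**}$, together with any admissible trajectory $\gamma_2$ from $x^*$ at $t=0$ to $x^{**}$ at $t=T_2$, available by reachability from $x^*$ to $x^{**}$. Concatenating $\gamma_1$ followed by $\gamma_2$ (with an obvious time shift) yields a closed trajectory based at $x^{**}$, so \eqref{cyclic} gives
\[
\int_{\gamma_1} s(u(t),y(t))\, dt + \int_{\gamma_2} s(u(t),y(t))\, dt \;\geq\; 0,
\]
equivalently $(-\int_{\gamma_1} s\, dt) + (-\int_{\gamma_2} s\, dt) \leq 0$. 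Fixing $\gamma_2$ and taking the supremum over $\gamma_1$ converts the first term into $S^*_{ac}(x^{**})$; the resulting bound holds for every $\gamma_2$, so taking the supremum over $\gamma_2$ converts the second term into $S^{**}_{ac}(x^*)$, giving the claimed $S^*_{ac}(x^{**}) + S^{**}_{ac}(x^*) \leq 0$.

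The second inequality is the dual statement and would be proved in exactly the same way with infima in place of suprema. I would concatenate a trajectory $\tilde\gamma_1$ from $x^*$ to $x^{**}$, which feeds the infimum defining $S^*_{rc}(x^{**})$, with a trajectory $\tilde\gamma_2$ from $x^{**}$ to $x^*$, which feeds the infimum defining $S^{**}_{rc}(x^*)$, into a closed loop based at $x^*$. Applying \eqref{cyclic} to this loop gives $\int_{\tilde\gamma_1} s\, dt + \int_{\tilde\gamma_2} s\, dt \geq 0$, and taking successive infima over $\tilde\gamma_1$ and then $\tilde\gamma_2$ yields $S^*_{rc}(x^{**}) + S^{**}_{rc}(x^*) \geq 0$.

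The argument contains no real obstacle; the only technical point that deserves a line of care is the passage from the pointwise inequality on pairs $(\gamma_1,\gamma_2)$ to the joint inequality on suprema (or infima), which is legitimate because the two trajectories vary independently, so successive maximization (respectively minimization) is valid. A minor conceptual remark is that the proposition does not spell out a cyclo-dissipativity hypothesis; without one, $S^*_{ac}$ could be $+\infty$ and the first inequality would fail, so the reading consistent with the surrounding section is that the system is cyclo-dissipative with respect to the ground states in question, after which the concatenation calculation is completely routine.
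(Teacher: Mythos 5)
Your argument is correct, but it takes a genuinely different route from the paper. The paper deduces the proposition in two lines from the extremality property \eqref{**} of Theorem \ref{cycthm}: since $S^{**}_{ac}$ is itself a storage function, \eqref{**} with ground-state $x^*$ gives $S^{*}_{ac}(x) \leq S^{**}_{ac}(x) - S^{**}_{ac}(x^{*})$, and symmetrically $S^{**}_{ac}(x) \leq S^{*}_{ac}(x) - S^{*}_{ac}(x^{**})$; chaining the two and cancelling $S^{*}_{ac}(x)$ yields $S^{*}_{ac}(x^{**}) + S^{**}_{ac}(x^{*}) \leq 0$, with the $S_{rc}$ case analogous. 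You instead go back to first principles: concatenate a trajectory from $x^{**}$ to $x^*$ with one from $x^*$ to $x^{**}$ into a closed loop, apply cyclo-dissipativity to that loop, and pass to suprema (respectively infima) over the two independently varying legs. Your handling of the sup/inf interchange over the product of trajectory sets is correct, and your closing remark is well taken: the proposition as stated carries no explicit hypothesis, and both proofs silently rely on the standing assumptions of Theorem \ref{cycthm} (cyclo-dissipativity together with reachability from and controllability to both ground-states) --- the paper's proof needs them so that \eqref{**} is available for both choices of ground-state, and yours needs them so that the connecting trajectories exist and the loop inequality \eqref{cyclic} applies. What each approach buys: the paper's is shorter given the machinery already established and makes transparent that the proposition is really a statement about the minimality of $S_{ac}$ (maximality of $S_{rc}$) in the family of normalized storage functions; yours is self-contained, avoids invoking that $S^{**}_{ac}$ is a storage function, and in fact isolates the minimal hypothesis actually needed, namely that closed trajectories passing through both $x^*$ and $x^{**}$ have nonnegative total supply.
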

\begin{proof}
By the first inequality of \eqref{**}
\bq
S^{*}_{ac}(x) \leq S^{**}_{ac}(x) - S^{**}_{ac}(x^{*}), \quad S^{**}_{ac}(x) \leq S^{*}_{ac}(x) - S^{*}_{ac}(x^{**})
\eq
This implies
\[
S^{*}_{ac}(x) + S^{**}_{ac}(x^{*}) \leq S^{*}_{ac}(x) - S^{*}_{ac}(x^{**})
\]
and thus $S^{*}_{ac}(x^{**}) + S^{**}_{ac}(x^{*}) \leq 0$. The second inequality follows analogously from the second inequality of \eqref{**}.
\end{proof}

\subsection{The non-controllable and/or non-reachable case}
The definitions of $S_{ac}$ and $S_{rc}$ can be extended to the non-controllable/non-reachable case by defining (see already \cite{hillmoylan1975})
\bq
\begin{array}{rcll}
S_{ac}(x) & = & - \infty, \quad & \mbox{if $x$ is not controllable to $x^*$} \\[2mm]
S_{rc}(x) & = & \infty, \quad &\mbox{if $x$ is not reachable from $x^*$}
\end{array}
\eq
Then it is easily verified that these extended functions $S_{ac}: \X \to -\infty \cup \mR$ and $S_{rc}:\X \to \mR \cup \infty$ still satisfy the dissipation inequality \eqref{DIE}, while also \eqref{rel} remains to hold.  On the other hand, \eqref{ineq} needs to be replaced by
\bq
\label{ineq1}
- \infty \leq S_{ac}(x) \leq S_a(x), \quad S_r(x) \leq S_{rc}(x) \leq \infty, \quad x \in \X
\eq
With regard to the characterization of cyclo-dissipativity we note that trivially $S_{ac}(x) \leq S_{rc}(x)$ whenever $x$ is non-controllable to $x^*$ or non-reachable from $x^*$. Thus also in the non-controllable/non-reachable case $\Sigma$ is cyclo-dissipative with respect to $x^*$ if and only if \eqref{acrc} holds. On the other hand, if $\Sigma$ is not controllable and not reachable then $S_{ac}$ and $S_{rc}$ are extended functions, and hence no cyclo-dissipativity can be concluded. We arrive at the following extension of Theorem \ref{cycthm} with completely analogous proof.
\begin{theorem}
$\Sigma$ is cyclo-dissipative with respect to $x^*$ if and only if
\bq
\label{acrc1}
S_{ac}(x) \leq S_{rc}(x), \quad x \in \X
\eq
Furthermore, if $\Sigma$ is cyclo-dissipative with respect to $x^*$ then
\bq
S_{ac}(x^*) = S_{rc}(x^*)=0,
\eq
and any other (possibly extended) function $S$ satisfying the dissipation inequality \eqref{DIE} is such that
\bq
S_{ac}(x) \leq S(x) - S(x^*) \leq S_{rc}(x), \quad x \in \X
\eq
\end{theorem}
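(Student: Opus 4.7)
The plan is to mirror the proof of Theorem \ref{cycthm}, absorbing the new non-controllable / non-reachable cases as essentially trivial sub-cases. I would partition $\X$ according to whether $x$ is reachable from $x^*$ and whether it is controllable to $x^*$. Off the set of states enjoying both properties, at least one of $S_{ac}(x)=-\infty$ or $S_{rc}(x)=+\infty$ holds by the extended definition, so both the equivalence $S_{ac}(x)\leq S_{rc}(x)$ and the two-sided bound $S_{ac}(x)\leq S(x)-S(x^*)\leq S_{rc}(x)$ reduce to vacuous statements on that side.

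On the remaining set where $x$ is simultaneously reachable from and controllable to $x^*$, the argument of Theorem \ref{cycthm} transfers verbatim. For the forward direction of \eqref{acrc1}, I would concatenate a trajectory from $x^*$ at time $-T_1$ to $x$ at time $0$ with one from $x$ at time $0$ to $x^*$ at time $T_2$, apply cyclo-dissipativity with respect to $x^*$ to the resulting closed loop, and take infimum on the left half and supremum on the right half to conclude $S_{rc}(x)\geq S_{ac}(x)$. For the converse, I would take any closed trajectory through $x^*$ and cut it at an arbitrary intermediate time $t_0$; the point $x(t_0)$ is then automatically reachable from $x^*$ (via the first half) and controllable to $x^*$ (via the second half), so the hypothesis $S_{ac}(x(t_0))\leq S_{rc}(x(t_0))$ applies and sandwiching gives $-\int_{t_0}^T s\,dt \leq S_{ac}(x(t_0))\leq S_{rc}(x(t_0)) \leq \int_0^{t_0} s\,dt$, which rearranges to $\oint s\,dt\geq 0$.

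The normalization $S_{ac}(x^*)=S_{rc}(x^*)=0$ then follows as in Theorem \ref{cycthm}: the trivial trajectory of length zero yields $S_{ac}(x^*)\geq 0 \geq S_{rc}(x^*)$, while \eqref{cyclicstar} applied to any closed trajectory through $x^*$ provides the reverse. For the final two-sided bound, on the good set the dissipation inequality \eqref{DIE} applied to any trajectory from $x$ to $x^*$ yields $S(x^*)-S(x)\leq\int_0^T s\,dt$; taking supremum over such trajectories gives $S(x)-S(x^*)\geq S_{ac}(x)$, and the upper bound involving $S_{rc}$ is obtained symmetrically.

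The only real obstacle is keeping the bookkeeping consistent for a possibly extended $S$. I would address this by invoking the propagation rule for $\pm\infty$ stated immediately after \eqref{DIE}, namely that $S(x(t_1))=-\infty$ forces $S(x(t_2))=-\infty$ and dually for $+\infty$; this guarantees that the differences $S(x)-S(x^*)$ appearing in the bound are well-defined whenever both sides of the bound are finite, while the inequality is automatic when either side is $\pm\infty$. With this caveat in place, the proof is a direct transcription of Theorem \ref{cycthm}, exactly as suggested by the statement.
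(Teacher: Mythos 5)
Your proposal is correct and follows essentially the same route as the paper, which simply observes that $S_{ac}(x)\leq S_{rc}(x)$ holds trivially (one side being $-\infty$ or $+\infty$) off the set of states both reachable from and controllable to $x^*$, and then declares the proof ``completely analogous'' to Theorem \ref{cycthm}. Your one genuine addition is the cut-at-$t_0$ sandwich argument for the converse, and it is a correct and needed patch: the converse of Theorem \ref{cycthm} goes through Proposition \ref{obvious} using that $S_{ac}$ and $S_{rc}$ are non-extended storage functions, which does not transfer here since they may take the values $\mp\infty$ (consistent with the paper only claiming cyclo-dissipativity \emph{with respect to} $x^*$ in this setting).
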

Here it could also be noted that, similar to dissipativity theory \cite{willems1972}, the set of storage functions is always {\it convex}.

\subsection{Clausius' inequality}
An interesting application of cyclo-dissipativity concerns the formulation and implications of the Second law of thermodynamics. The standard argumentation in classical thermodynamics, see e.g. \cite{fermi}, \cite{kondepudi}, is to derive from the Second law, by using the {\it Carnot cycle}, the inequality
\bq
\label{secondlaw}
\oint \frac{q(t)}{T(t)} dt \leq 0
\eq
for all cyclic processes, where equality holds for so-called {\it reversible} cyclic processes. Here $q$ denotes the heat flow supplied to the system and $T$ is the temperature. Clearly this is the same as cyclo-dissipativity with respect to the supply rate $-\frac{q}{T}$. Based on \eqref{secondlaw}, one defines the {\it entropy} $\cS$ as a function of the state of the thermodynamic system, and derives the {\it Clausius inequality} 
\bq
\label{clausius}
\cS(x(t_2))-\cS(x(t_1)) \geq \int_{t_1}^{t_2} \frac{q(t)}{T(t)} dt ,
\eq
with equality for reversible trajectories. This leads to the dissipativity formulation of the Second law as given in \cite{willems1972}, see also \cite{haddad}. Indeed, if the entropy $\cS$ is assumed to be {\it bounded from above}, then it follows that the thermodynamic system is {\it dissipative} with respect to the supply rate $-\frac{q}{T}$ and the nonnegative storage function $S=-\cS +c$, where $c$ is a suitable constant. However, for general thermodynamic systems there is no reason why the entropy $\cS$ is bounded from above; in which case $S=-\cS$ is truly indefinite, and the system is only cyclo-dissipative instead of dissipative.

Interesting, the usual definition \cite{fermi} of the entropy function $\cS$ satisfying \eqref{clausius} is based on considering {\it reversible} cycles (for which \eqref{secondlaw} holds with equality), and on the assumption that every state is reachable from a certain ground-state using such a reversible cycle. {\it Without} this assumption Theorem \ref{cycthm} comes into play. Indeed, assuming reachability from and controllability to a certain ground-state $x^*$ it follows from Theorem \ref{cycthm} that \eqref{secondlaw} implies the existence of storage functions $S$ satisfying $S_{ac}(x) \leq S(x) - S(x^*) \leq S_{rc}(x)$. Furthermore, for any such $S$ the function $\cS:=-S$ qualifies as a possible entropy function satisfying the Clausius inequality \eqref{clausius}. At the same time this leads to the question of uniqueness of the entropy function. For example, a 'maximal' entropy function could be defined as $\cS:=-S_{ac}$.

Clearly, in case all processes are reversible then $S_{ac}(x)=S_{rc}(x)$, leading to a unique (modulo a constant) storage and entropy function.

\subsection{One-port cyclo-passivity}
Cyclo-passivity may also naturally arise in the context of {\it one-port cyclo-passivity}; see \cite{submitted} for the general theory. 
An illustrative example is the {\it capacitor microphone}; see also \cite{submitted} for further details.
Consider an RC electrical circuit with voltage source $E$, where the capacitance $C(q)$ of the capacitor depends on the displacement $q$ of one of the plates of the capacitor, attached to a spring $k$ and a damper $d$, and affected by a mechanical force $F$ (air pressure).
The standard model is (in port-Hamiltonian formulation)
\bq
\begin{array}{rcl}
\left[\begin{array}{rcl}
\dot{q} \\
\dot{p} \\
\dot{Q}
\end{array}
\right] & = &
\left( \left[ 
\begin{array}{ccc}
0 & 1 & 0 \\ 
-1 & 0 & 0 \\ 
0 & 0 & 0
\end{array}
\right] -\left[ 
\begin{array}{ccc}
0 & 0 & 0 \\ 
0 & d & 0 \\ 
0 & 0 & 1/R
\end{array}
\right] \right)
\left[ 
\begin{array}{c}
\frac{\partial H}{\partial q} \\[2mm]
\frac{\partial H}{\partial p} \\[2mm]
\frac{\partial H}{\partial Q}
\end{array}
\right]\\
&& +\left[ 
\begin{array}{c}
0 \\ 
1 \\ 
0
\end{array}
\right] F+\left[ 
\begin{array}{c}
0 \\ 
0 \\ 
\frac{1}{R}\\
\end{array}
\right] E \\[6mm] 
v &= &\frac{\partial H}{\partial p}=\dot{q} \\[2mm]
I & = & \frac{1}{R}\frac{\partial H}{\partial Q}
\end{array}
\eq
where $Q$ is the charge at the capacitor, $p$ is the momentum of the movable plate with mass $m$, $R$ is the resistance of the resistor, and the Hamiltonian (total energy) equals
\bq
H(q,p,Q)=\frac{1}{2m}p^{2}+\frac{1}{2}kq^{2}+\frac{1}{2C(q)}
Q^{2}
\eq
It follows that
\bq
\frac{d}{dt}H= - d\dot{q}^2 - RI^2 + EI + Fv   \leq EI + Fv,
\eq
with $EI$ the {\it electrical power} and $Fv$ the \emph{mechanical power} supplied to the system. Thus this two-port system is {\it passive}.

For \emph{constant current} $I=\bar{I}$ at the electrical port it turns out \cite{submitted} that the system is cyclo-passive at the \emph{mechanical port}, with storage function (obtained by partial Legendre transformation of $H$)
\bq
\begin{array}{rcl}
H^*(q,p) & = & \frac{1}{2}kq^2 + \frac{p^2}{2m} + \frac{Q^2}{2C(q)} - \bar{V}Q =\\[2mm]
&=& \frac{1}{2}kq^2 + \frac{p^2}{2m} - \frac{1}{2}C^2(q)\bar{V}^2
\end{array}
\eq
where $\bar{V}= \frac{\partial H}{\partial Q}= R\bar{I}$. In fact
\bq
\frac{d}{dt}H^*= - d\dot{q}^2 - R\bar{I}^2 + Fv   \leq Fv
\eq
A typical expression for the capacitance $C(q)$ is $C(q) = \frac{1}{c_1 + c_2q}$, for certain constants $c_1,c_2\geq 0$. If $c_1 >0$ then
\bq
- \frac{1}{2}C^2(q)\bar{V}^2 \geq -\frac{1}{2c_1} \bar{V}^2,
\eq
and hence the system for $I=\bar{I}$ is \emph{passive} at the mechanical port. However, in case $c_1=0$ the storage function $H^*(q,p)$ for constant current $I=\bar{I} \neq 0$ is {\it not} bounded from below, and thus the system is only cyclo-passive at the mechanical port.

\section{Conclusions}
Cyclo-dissipativity and cyclo-passivity appears naturally in quite a few cases, from modeling to passivity-based control; see also \cite{willems2007} for additional motivation.
By symmetrizing and extending classical definitions and results of dissipativity theory we derived in a transparant way some new results concerning cyclo-dissipativity, including external characterizations and description of the set of (indefinite) storage functions.

\end{document}